\newtheorem{lemma}{Lemma}
\newtheorem{theorem}{Theorem}
\newtheorem{example}{Example}
\newtheorem{definition}{Definition}
\def\hm#1{#1\nobreak\discretionary{}{\hbox{\ensuremath{#1}}}{}}
\def\Z{{\mathbb Z}}
\def\N{{\mathbb N}}
\def\eps{\varepsilon}
\def\phi{\varphi}
\def\dec{\mathop{\mathrm{dec}}}
\def\le{\leqslant}
\def\ge{\geqslant}
\def\tt{{\mathfrak t}}
\def\TT{{\mathfrak T}}
\begin{document}

\title[Parity of the number of primes and sublinear summations]{Parity of the number of~primes in a given interval and algorithms of the sublinear summation}
\author{Andrew V. Lelechenko}
\address{I.~I.~Mechnikov Odessa National University}
\email{1@dxdy.ru}

\keywords{prime-counting function, summation of multiplicative functions, sublinear summation}
\subjclass[2010]{11Y11, 11Y16, 11Y70}

\begin{abstract}
Recently Tao, Croot and Helfgott \cite{tao2012} invented an algorithm to determine the parity of the number of primes in a given interval in $O(x^{1/2-c+\eps})$ steps for some absolute constant~$c$. We propose a slightly different approach, which leads to the implicit value of~$c$.

To achieve this aim we discuss the summation of multiplicative functions, developing sublinear algorithms and proving several general theorems.
\end{abstract}

\maketitle

\section{Introduction}

How many operations are required to find  any prime~$p > x$ (not necessary the closest) for given $x$? 

A direct approach is to apply AKS primality test~\cite{agrawal2004}, which was im\-pro\-ved by Lenstra and Pomerance \cite{lenstra2011} to run in time~$O(\log^{6+\eps} x)$, on  consecutive integers starting with $x$. Such method leads to an algorithm with average complexity~$O(\log^{7+\eps} x)$, because in average we should run AKS $\log x$ times before a next prime encounters.

But in the worst case available estimates of the complexity are much bigger; they depend on upper bounds of the gaps between primes. The best currently known result on the gaps between primes is by Baker, Harman and Pintz~\cite{baker2001}: for large enough $x$ there exists at least one prime in the interval 
$$[x,x+x^{0.525+\eps}].$$
Thus we obtain that the worst case of an algorithm may need up to
$$O(x^{0.525+\eps}) \gg x^{1/2}$$
operations.

One can propose another algorithm, which is distinct from the pointwise testing. Suppose that there is a test, which allows to determine whether a given interval $[a,b]\subset[x,2x]$ contains at least one prime in~$A(x)$ operations. Then (starting with interval $[x,2x]$) we are able to find a~prime $p>x$ in $A(x)\log x$ operations using a dichotomy.

A test to determine whether a given interval contains at least one prime can be built atop Lagarias---Odlyzko formula for $\pi(x)$ \cite{lagarias1987}, which provides an al\-go\-rithm with $O(x^{1/2+\eps}) \gg x^{1/2}$ complexity. See \cite{tao2012} for more detailed discussion.

In \cite{tao2012} Tao, Croot and Helfgott offer a hypothesis that there exists an al\-go\-rithm to compute~$\pi(x)$
in $O(x^{1/2-c+\eps})$ operations, where $c>0$ is some absolute constant. This implies that a prime $p>x$ can be found in~$O(x^{1/2-c+\eps}) \ll x^{1/2}$ steps. Authors prove the following weaker theorem \cite[Th. 1.2]{tao2012}.

\begin{theorem}[Tao, Croot and Helfgott, 2012]
There exists an absolute constant $c>0$, such that one can (deterministically) decide whether a~given interval $[a,b]$ in $[x,2x]$ of length at most $x^{1/2+c}$ contains an \emph{odd number} of primes in time $O(x^{1/2-c+o(1)})$.
\end{theorem}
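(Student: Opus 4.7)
\medskip

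\noindent
\textbf{Proof plan.}
The plan is to translate the parity question for primes in $(a,b]$ into a parity question about a sum of values of a multiplicative function, and then to invoke the sublinear summation machinery that the paper develops in its technical sections.

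\smallskip
\noindent
First, I would use a Legendre-type sieve identity. Since $(a,b]\subset[x,2x]$, any $n\in(a,b]$ is prime if and only if $n$ has no prime divisor $\le\sqrt{2x}$ (as two such factors would already give $n>2x$). Inclusion-exclusion therefore yields, up to a negligible correction for $n\le\sqrt{2x}$,
\begin{equation*}
\pi(b)-\pi(a) \;=\; \sum_{\substack{d\text{ squarefree}\\ p\mid d \Rightarrow p\le\sqrt{2x}}} \mu(d)\bigl(\lfloor b/d\rfloor-\lfloor a/d\rfloor\bigr).
\end{equation*}
Taking the whole identity modulo $2$ turns the problem into the evaluation, modulo $2$, of a sum of $\mu$-values weighted by a simple floor difference. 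Equivalently, one may phrase this via the Liouville function or via $[\Omega(n)=1]\pmod 2$ after peeling off integers with three or more prime factors, whichever gives the cleanest multiplicative structure.

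\smallskip
\noindent
Second, I would decompose this sum by a Dirichlet hyperbola / dyadic split on $d$, complemented by a combinatorial identity for $\mu$ in the spirit of Vaughan or Heath-Brown. Pieces with small $d$ contribute large floor differences but there are few of them and their combined weight can be read off from partial sums of $\mu$; pieces with large $d$ have $\lfloor b/d\rfloor-\lfloor a/d\rfloor\in\{0,1\}$ and thus become combinatorial counts that fit the hyperbola framework. The standard Meissel--Mertens--Lehmer treatment at this point gives complexity $O(x^{1/2+o(1)})$, matching the Lagarias--Odlyzko bound mentioned earlier in the introduction; so the nontrivial input is the sublinear summation of multiplicative functions promised in the abstract, which replaces the bilinear pieces coming from the $\mu$-decomposition by expressions computable faster than $\sqrt{x}$.

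\smallskip
\noindent
The hard part, and the step that forces $c$ to be small and implicit, is precisely this final speed-up. The interval length $x^{1/2+c}$ is already larger than the target running time $x^{1/2-c+o(1)}$, so the algorithm must never iterate over the integers of $(a,b]$ individually, nor over all admissible divisors $d\le x^{1/2+c}$; it has to compute every partial sum of $\mu$ (and of the auxiliary multiplicative functions arising from the decomposition) in the aggregate, in time strictly below $\sqrt{x}$. Choosing $c$ so that every bilinear piece of the $\mu$-identity can be summed sublinearly by the general theorems of the paper --- and so that the accumulated error from rounding $\lfloor b/d\rfloor-\lfloor a/d\rfloor$ to its mod-$2$ value is still controlled --- is the technical heart of the proof and the place where the explicit constant~$c$ eventually emerges.
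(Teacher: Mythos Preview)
The paper does not itself prove this theorem --- it is quoted from Tao, Croot and Helfgott --- but it does prove the sharper Theorem~\ref{th:polymath2}, and that proof is the relevant comparison. There the approach is entirely different from yours and much more direct: one observes that $\tau_2^*(n)=2^{\omega(n)}$ is divisible by $4$ whenever $n$ has at least two distinct prime factors, so
\[
\sum_{a\le n\le b}\tau_2^*(n)\;\equiv\;2\sum_{j\ge1}\#\bigl\{p:p^{\,j}\in[a,b]\bigr\}\pmod 4,
\]
and the parity of $\#\{p\in[a,b]\}$ falls out once $T_2^*(b)-T_2^*(a-1)$ is computed and the few higher prime powers in $[a^{1/j},b^{1/j}]$ are tested by AKS. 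The entire burden is then the single summatory function $T_2^*$; since $\tau_2^*$ has Dirichlet series $\zeta^2(s)/\zeta(2s)$, Theorem~\ref{th:qqsum} (see Example~\ref{th:qsum3}) gives deceleration $7/15<1/2$, which is what breaks the $\sqrt{x}$ barrier.

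Your Legendre-sieve/Vaughan route has a genuine gap at the decisive step. You correctly note that the standard treatment stalls at $x^{1/2+o(1)}$ and then defer to ``the sublinear summation machinery'' to push below $\sqrt x$. But that machinery delivers sub-$\sqrt x$ summation only for functions whose Dirichlet series factor favourably; for $\mu$ it gives $\dec\mu=2/3$ (Example~\ref{ex:mu}), and the Liouville function is no better. The bilinear pieces arising from a Vaughan or Heath--Brown decomposition of $\mu$ are not themselves partial sums of a single multiplicative function with small deceleration, so nothing in Section~\ref{s:fast} applies to them. In short, your plan never names a function that both (i) detects the parity of primes in $(a,b]$ and (ii) can be summed in time $x^{1/2-c}$; the paper's point is precisely that $\tau_2^*$ does both, and that choice is the idea, not a detail.
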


The aim of our paper is to prove the following result. 

\begin{theorem}\label{th:polymath2}
Let $[a,b]\subset[x,2x]$, $b-a\le x^{1/2+c}$, $c$ is an arbitrary constant such that $0<c\le 1/2$. Then a parity of 
$\#\{ p\in[a,b] \}$ can be determined in time $$O(x^{\max(c,7/15)+\eps}).$$
\end{theorem}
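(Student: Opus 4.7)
The plan is to express the parity of $\#\{p \in (a,b]\}$ as a $\Z$-linear combination of partial sums of multiplicative functions over short intervals, and then to evaluate these sums via the sublinear summation algorithms developed in the body of the paper.

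The reduction uses $\Omega(n)$, the number of prime factors of $n$ counted with multiplicity. Since primes are characterised by $\Omega(n)=1$, which is odd,
\begin{equation*}
\#\{p \in (a,b]\} \;=\; \#\{n \in (a,b] : \Omega(n) \text{ is odd}\} \;-\; \sum_{\substack{k \ge 3 \\ k \text{ odd}}} \#\{n \in (a,b] : \Omega(n)=k\}.
\end{equation*}
The leading term equals $\tfrac{1}{2}\bigl((b-a)-(L(b)-L(a))\bigr)$, where $L(x)=\sum_{n\le x}\lambda(n)$ is the summatory Liouville function, since $\mathbf{1}_{\Omega(n)\text{ odd}} = (1-\lambda(n))/2$. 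Each corrective term is supported on $n$ whose smallest prime factor is at most $n^{1/k}\le (2x)^{1/3}$, so splitting by this smallest prime $p$ and writing $n=pm$ reduces to a shorter-interval count with $\Omega(m)=k-1$. Iterating produces a tree of $O(\log x)$ recursive sub-problems, each on an interval contracted by the chosen prime.

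To compute $L(b)-L(a)$ I would use the identity $\lambda(n)=\sum_{d^2\mid n}\mu(n/d^2)$ (coming from $\sum_n \lambda(n)n^{-s}=\zeta(2s)/\zeta(s)$), obtaining
\begin{equation*}
L(b)-L(a) \;=\; \sum_{d\le\sqrt{b}}\bigl(M(\lfloor b/d^2\rfloor)-M(\lfloor a/d^2\rfloor)\bigr),
\end{equation*}
where $M(x)=\sum_{n\le x}\mu(n)$ is the Mertens function. Each short-interval Mertens difference is then evaluated by the sublinear algorithm for partial sums of $\mu$. The recursive corrective pieces are reduced to analogous short-interval sums and handled by the same machinery after re-parametrising by the smallest prime factor.

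The main obstacle is the quantitative balance of running times. The exponent $7/15$ should emerge from optimising the hyperbola-method cutoff in the short-interval Mertens summation, while the term $x^c$ in $\max(c,7/15)$ reflects the cost proportional to $(b-a)/\sqrt{x}=x^c$ incurred by the outer split over $d\le\sqrt{b}$. Verifying that the contributions of the $\Omega(n)=k$ corrections, summed over $p\le(2x)^{1/3}$ and odd $k\ge 3$, remain within the same bound is the most delicate part of the analysis.
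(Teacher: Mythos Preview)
Your approach diverges from the paper's at the most important point, and as written it does not reach the exponent $7/15$.

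\textbf{The main term.} The paper does \emph{not} work with $\lambda$ or $\Omega$. It uses $\tau_2^*(n)=2^{\omega(n)}$, which is $\equiv 0\pmod 4$ whenever $\omega(n)\ge 2$. Hence
\[
T_2^*(b)-T_2^*(a-1)\equiv 2\sum_{j\ge 1}\#\{p\in[a^{1/j},b^{1/j}]\}\pmod 4,
\]
and the right-hand side isolates prime powers cleanly. The exponent $7/15$ then comes from the factorisation $\sum\tau_2^*(n)n^{-s}=\zeta(s)^2/\zeta(2s)$ together with Theorem~\ref{th:qqsum} applied with $g=\tau_2$ ($\dec\tau_2=1/3$) and $f=\mu$ on squares ($\dec\mu=2/3$): this gives $\dec\tau_2^*=(1-\tfrac{2}{3}\cdot\tfrac{1}{3})/\bigl(\tfrac{1}{3}\cdot1+\tfrac{2}{3}\cdot2\bigr)=7/15$. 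Your Liouville route has Dirichlet series $\zeta(2s)/\zeta(s)$, and the same theorem with $f=\mathbf 1$, $g=\mu$, $k_1=2$, $k_2=1$ yields only $\dec\lambda=1/(1+\tfrac{2}{3})=3/5$. Writing $L(b)-L(a)=\sum_{d}M(\lfloor b/d^2\rfloor)-M(\lfloor a/d^2\rfloor)$ and invoking the $O(x^{2/3+\eps})$ Mertens algorithm is even worse: the $d=1$ term alone already costs $x^{2/3+\eps}$. The short interval does not help here, since $M(b)-M(a)$ is not known to be computable faster than $M(b)$ itself. In short, the $7/15$ comes from the \emph{fast $\tau_2$ summation} ($\dec\tau_2=1/3$), which your decomposition never uses.

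\textbf{The correction terms.} In the paper the only corrections are the counts $\#\{p\in[a^{1/j},b^{1/j}]\}$ for $j\ge 2$; each such interval has length $O(x^{c})$, so pointwise AKS testing costs $O(x^{c+\eps})$ in total. Your corrections, by contrast, require the parity of $\#\{n\in(a,b]:\Omega(n)=k\}$ for every odd $k\ge 3$. The sketched recursion on the smallest prime factor branches over all primes $p\le (2x)^{1/3}$ at the very first step; calling this ``a tree of $O(\log x)$ recursive sub-problems'' understates the work by a polynomial factor, and no argument is given that the total cost stays below $x^{7/15+\eps}$. Likewise, the claim that the $x^{c}$ term reflects a cost of size $(b-a)/\sqrt{x}$ is not substantiated: summing the interval lengths $(b-a)/d^2$ over $d\le\sqrt b$ gives $O(b-a)=O(x^{1/2+c})$, not $O(x^{c})$.

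So the missing idea is precisely the choice of $\tau_2^*$: it simultaneously (i) kills all non-prime-power contributions modulo $4$, avoiding any $\Omega(n)=k$ bookkeeping, and (ii) factors through $\tau_2$, which is what produces the exponent $7/15$.
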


In Section \ref{s:general} we discuss a general approach to sum up multiplicative functions on given intervals. In Section \ref{s:fast} we consider cases when such summation can be done in a sublinear time. Finally, in Section \ref{s:proof} Theorem \ref{th:polymath2} is proven.

\section{The general summation algorithm}\label{s:general}

Consider the summation 
$$\sum_{n\le x} f(n),$$
where $f$ is a multiplicative function, from the complexity's point of view.

\bigskip

Generally speaking, a property of the multiplicativity does not impose signi\-fi\-cant restrictions on pointwise computational complexity. Multiplicative functions can be both easily-computable (e. g., $f(n)\hm=n^k$ for every~$k$) and hardly-computable: e. g.,
$$ f(p^\alpha)=\left\{ 
\begin{array}{cl}
2, & \quad\text{if there are $p^\alpha$ consecutive zeroes in digits of $\pi$} \\
1, & \quad\text{otherwise,}
\end{array} \right. $$

Luckily the vast majority of multiplicative functions, which have applications in the number theory, are relatively easily-computable. 

\begin{definition}
A multiplicative function $f$ is called {\em easily-computable,} if for any prime $p$, integer $\alpha>0$ and real $\eps>0$ the value of $f(p^\alpha)$ can be computed in time $O(p^\eps \alpha^m)$ for some absolute constant $m$, depending only on~$f$.
\end{definition}

\begin{example}\rm
The (two-dimensional) divisor function $\tau_2(p^\alpha) = \alpha+1$, the (two-dimensional) unitary divisor function $\tau_2^*(p^\alpha) = 2$, the totient function~$\phi(p^\alpha) = p^\alpha-p^{\alpha-1}$, the sum-of-divisors function $\sigma(p^\alpha) \hm= (p^{\alpha+1}-1)/(p-1)$, the Möbius function $\mu(p^\alpha)=[\alpha<2](-1)^\alpha$ are examples of easily-computable multiplicative functions for any $m>0$.
\end{example}

\begin{example}\rm
Let $a(n)$ be the number of non-isomorphic abelian groups of order $n$. Then $a(p^\alpha)=P(\alpha)$, where $P(n)$ is a number of partitions of $n$. It is known \cite[Note I.19]{flajolet2009}, that $P(n)$ is computable in $O(n^{3/2})$ operations. Thus function $a(n)$ is an easily-computable multiplicative function with~$m=3/2$. 

The number of rings of $n$ elements is known to be multiplicative, but no explicit formula exists currently for $\alpha\ge4$. See OEIS \cite{oeis2011} sequences A027623, A037289 and A037290 for further discussions.
\end{example}

\begin{example}\rm
The Ramanujan tau function $\tau_R$ is a rare example of an important number-theoretical multiplicative function, which is not easily-com\-pu\-table. The best known result is due to Charles~\cite{charles2006}: a value of $\tau_R(p^\alpha)$ can be computed by $p$ and $\alpha$ in $O(p^{3/4+\eps} + \alpha)$ operations.
\end{example}

Surely pointwise product and sum of easily-computable functions are also easily-computable ones. The following statement shows that the Dirichlet con\-vo\-lu\-tion 
$$
(f\star g)(n) = \sum_{d\mid n} f(d) g(n/d)
$$
also saves a property of easily-computability.

\begin{lemma}\label{l:convolution-computability}
If $f$ and $g$ are easily-computable multiplicative functions, then
$$h:=f\star g$$
is also easily-computable.
\end{lemma}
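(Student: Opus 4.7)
The plan is to exploit the fact that the Dirichlet convolution of multiplicative functions is multiplicative, so it suffices to control the cost of computing $h(p^\alpha)$ at a prime power. For a prime power the convolution collapses to the one-parameter sum
$$h(p^\alpha) = \sum_{k=0}^{\alpha} f(p^k)\, g(p^{\alpha-k}),$$
since the only divisors of $p^\alpha$ are the powers $p^0,p^1,\ldots,p^\alpha$. This formula reduces the problem to bookkeeping over $\alpha+1$ summands, each of which is a product of two easily-computable values.

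First I would invoke the hypothesis on $f$ and $g$: there exist constants $m_f$ and $m_g$ such that $f(p^k)$ and $g(p^k)$ are each computable in time $O(p^\eps k^{m_f})$ and $O(p^\eps k^{m_g})$ respectively, uniformly in $k\le\alpha$. Computing the entire tables $\{f(p^k)\}_{k=0}^\alpha$ and $\{g(p^k)\}_{k=0}^\alpha$ thus costs $O(p^\eps \alpha^{m_f+1})$ and $O(p^\eps \alpha^{m_g+1})$ operations in total. Once these tables are available, evaluating $h(p^\alpha)$ requires $\alpha+1$ multiplications and $\alpha$ additions, which adds at most a factor that can be absorbed into $p^\eps$.

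Summing the three contributions yields a total bound of the form $O(p^\eps \alpha^{m})$ with $m = \max(m_f,m_g)+1$ (or any larger constant), which is precisely what easily-computability of $h$ demands. The only step that requires a little care is the implicit claim that arithmetic on the intermediate values (whose bit-size is controlled by the easily-computability bounds on $f$ and $g$) costs no more than the computation of those values themselves; this is standard and is handled by letting $\eps$ absorb the polylogarithmic overhead of multiprecision arithmetic, exactly as in the definition.

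I do not expect a genuine obstacle here: the entire argument is a direct unpacking of the definitions together with the observation that a divisor of $p^\alpha$ is a power of $p$. The only conceptual point worth stressing in the written-up proof is that multiplicativity of $h$ allows us to restrict attention to prime powers in the first place, so that the cost bound never has to be uniform over all integers $n$ simultaneously.
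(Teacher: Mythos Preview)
Your proof is correct and follows essentially the same route as the paper: reduce to prime powers via multiplicativity, use $h(p^\alpha)=\sum_{k=0}^{\alpha} f(p^k)g(p^{\alpha-k})$, and bound the total cost by $O(p^{\eps}\alpha^{m+1})$. The paper's version is terser (it simply takes a common exponent $m$ for both $f$ and $g$ and sums the per-term costs), but the idea and the resulting bound are the same.
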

\begin{proof}
By definition of easily-computable functions there exists $m$ such that $f(p^\alpha)$ and $g(p^\alpha)$ can be both computed in $O(p^\eps \alpha^m)$ time. 

By definition of the Dirichlet convolution
$$ 
h(p^\alpha) = \sum_{a=0}^\alpha f(p^a) g(p^{\alpha-a}).
$$
This means that computation of $h(p^\alpha)$ requires
$$
\sum_{a=0}^\alpha O(p^\eps a^m + p^\eps (\alpha-a)^m) \ll p^\eps \alpha^{m+1}
$$
operations.
\end{proof}

\bigskip

Firstly, consider a trivial summation algorithm: calculate values of function pointwise and sum them up. For an easily-computable multiplicative function the majority of time will be spend on the factoring numbers from~$1$ to $x$ one-by-one. But no polynomial-time factoring algorithm is currently known; the best algorithms (e. g., GNFS \cite{lenstra1993}) have complexities about 
$$
\exp\left((c+\eps)(\log n)^{\frac{1}{3}}(\log \log n)^{\frac{2}{3}}\right),
$$
which is very expensive. 
 
\bigskip

We propose a faster general method like the sieve of Eratosthenes. We shall refer to it as to {\em Algorithm M.}

\bigskip

{\bf Algorithm M.} Consider an array $A$ of length $x$, filled with integers from~$1$ to $x$, and an array $B$ of the same length, filled with~1. Values of $f(n)$ will be computed in the corresponding cells of $B$. 

For each prime $p\le\sqrt{x}$ cache values of $f(p),f(p^2),\ldots,f(p^{\lfloor \log x / \log p \rfloor})$ and take integers 
$$
k=p,2p,3p,\ldots,\lfloor x/p \rfloor p
$$ 
one-by-one; for each of them determine $\alpha$ such that $p^\alpha \parallel k$ and replace~$A[k]$ by~$A[k]/p^\alpha$ and $B[k]$ by $B[k]\cdot f(p^\alpha)$.  

After such steps cells of $A$ contain 1 or primes $p > \sqrt{x}$. So for each~$n$ such that~$A[n]\ne 1$ multiply $B[n]$ by $f(A[n])$. 

Now array $B$ contains computed values of $f(1),\ldots,f(n)$. Sum up its cells to end the algorithm. 

\bigskip

Algorithm M can be encoded in pseudocode as it is shown in Listing~\ref{list:M}. 

\begin{figure}
\flushleft
\par $sum(\textit{ff}, x) = $
\par ~~ $\Sigma = 0$
\par ~~ $A \gets \{k\}_{k=1}^x$
\par ~~ $B \gets \{1\}_{k=1}^x$
\par ~~ for prime $p \le \sqrt{x} $ 
\par ~~ ~~ $F \gets \{\textit{ff}(p, \alpha)\}_{\alpha=1}^{\log x/\log p}$
\par ~~ ~~ for $k \gets p,2p,\ldots,\lfloor x/p \rfloor p $
\par ~~ ~~ ~~ $\alpha \gets \max \{a \mid p^a|k \} $
\par ~~ ~~ ~~ $A[k] \gets A[k]/p^\alpha$
\par ~~ ~~ ~~ $B[k] \gets B[k]\cdot F[\alpha]$
\par ~~ for $n \gets 1,\ldots,x$
\par ~~ ~~ if $A[n]\ne1  \Rightarrow B[n] \gets B[n]\cdot \textit{ff}(n, 1) $
\par ~~ for $n \gets 1,\ldots,x$
\par ~~ ~~ $\Sigma \gets \Sigma + B[n]$
\par ~~ return $\Sigma$
{
\renewcommand\figurename{Listing}
\caption{Pseudocode of Algorithm M. Here $\textit{ff}(p,\alpha)$ stands for the routine that effectively computes $f(p^\alpha)$.}
\label{list:M}
}
\end{figure}

Note that (similarly to the sieve of Eratosthenes) instead of the continuous array of  length $x$ one can manipulate with the set of arrays of length~$\Omega(\sqrt x)$. Inner cycles can be run independently of the order; they can be paralleled easily. Also one can compute several easily-computable functions simultaneously with a slight modification of Algorithm M.

\begin{lemma}
\label{l:sieve-compl}
If $f$ is an easily-computable multiplicative function then Al\-go\-rithm M runs in time $O(x^{1+\eps})$.
\end{lemma}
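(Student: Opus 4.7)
The plan is to analyze each stage of Algorithm M separately and bound the total cost by summing over stages. The stages are: (i) initialization of the arrays $A$ and $B$; (ii) the outer loop over primes $p\le\sqrt x$, which contains the caching step for $f(p^\alpha)$ and the inner loop over multiples of $p$; (iii) the post-processing step that multiplies by $f(A[n])$ whenever $A[n]\ne1$; and (iv) the final summation of $B$.

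Stages (i) and (iv) are clearly $O(x)$. For the caching of $\{f(p^a)\}_{a\le\log x/\log p}$, easy-computability of $f$ gives a cost $O(p^\eps \log^{m+1} x)$ for a single prime; summing over primes $p\le\sqrt x$ yields $O(x^{1/2+\eps})$, which is absorbed. The nontrivial contribution comes from the inner loop: for each prime $p\le\sqrt x$ we touch $\lfloor x/p\rfloor$ multiples of $p$, and each visit requires (a) extracting the exact exponent $\alpha$ with $p^\alpha\parallel k$, which is $O(\log x)$ divisions, and (b) a lookup in $F$ plus constant-arithmetic updates of $A[k]$ and $B[k]$. The total cost of the outer loop is thus
$$
\sum_{p\le\sqrt x}\frac{x}{p}\cdot O(\log x)
\ll x\log x\sum_{p\le\sqrt x}\frac1p
\ll x\log x\log\log x,
$$
by Mertens' theorem, which is $O(x^{1+\eps})$.

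For stage (iii), after the outer loop each cell $A[n]$ holds either $1$ or a prime in $(\sqrt x,x]$ (the at most one prime factor of $n$ exceeding $\sqrt x$). In the latter case we invoke the routine $\textit{ff}$ once at exponent $1$, which by easy-computability costs $O(A[n]^\eps)=O(x^\eps)$. Summing over at most $x$ cells gives $O(x^{1+\eps})$.

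Adding the four stages yields $O(x^{1+\eps})$. The only subtle point I foresee is the bookkeeping for $\eps$: the exponent $\eps$ in the cost of $f(p^\alpha)$ is not the same as the $\eps$ in the final bound, so one must choose the former sufficiently small and absorb the logarithmic factors from Mertens and from the exponent extraction into the final $x^\eps$. Beyond that, correctness of the algorithm itself (namely, that after the outer loop each $A[n]$ is the largest prime factor of $n$ if it exceeds $\sqrt x$ and is $1$ otherwise, and that $B[n]$ has accumulated $\prod_{p^\alpha\parallel n,\;p\le\sqrt x}f(p^\alpha)$) follows directly from the multiplicativity of $f$ and the fact that every integer $n\le x$ has at most one prime factor exceeding $\sqrt x$.
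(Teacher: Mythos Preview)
Your proof is correct and follows essentially the same decomposition as the paper's own argument. The paper's proof is a single displayed line
\[
\sum_{p\le \sqrt{x}} p^\eps \sum_{\alpha \le \log x / \log p} \alpha^m
+ \sum_{p\le \sqrt{x}} \frac{x}{p}
+ \sum_{\sqrt{x} < p \le x} p^\eps
\ll
x^{1+\eps},
\]
which is exactly your caching cost, inner-loop cost, and post-processing cost, only compressed; your version is more explicit (you spell out the $O(\log x)$ factor for exponent extraction, invoke Mertens' theorem, and discuss the $\eps$-bookkeeping), but the underlying estimate is the same.
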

\begin{proof}
The description of Algorithm M shows that its running time is asymptotically lesser than
$$
\sum_{p\le \sqrt{x}} p^\eps \sum_{\alpha \le \log x / \log p} \alpha^m 
+ \sum_{p\le \sqrt{x}} {x\over p} 
+ \sum_{\sqrt{x} < p \le x} p^\eps
\ll 
x^{1+\eps}.
$$
\end{proof}

\section{The fast summation}\label{s:fast}

\begin{definition}
We say that function $f$ {\em sums up with the deceleration~$a$,} if the function~$F(x)\hm=\sum_{n\le x} f(n)$ can be computed in
$O(x^{a+\eps})$ time. 

Denote the deceleration of $f$ as $\dec f$. Notation $\dec f = a$ means exactly that there exists a method to sum up function $f$ with the deceleration $a$ (not necessarily there is no faster method).
\end{definition}

\begin{example}\rm
Lemma \ref{l:sieve-compl} shows that any easily-computable multiplicative function sums up with the deceleration $1$.
\end{example}

\begin{example}\label{ex:k-powers}\rm
Function $f(n)=n^k$, $k\in\mathbb{Z}_+$, sums up in time $O(1)$, because there is an explicit formula for $F(x)$ using Bernoulli numbers. Thus its de\-ce\-le\-ra\-tion is equal to $0$. Note that Dirichlet series of $f$ is~$\zeta(s-k)$, including case~$\zeta(s)$ when $k=0$.

One can check that the same can be said about $f(n)=\chi(n) n^k$, where~$\chi$ is an arbitrary multiplicative character modulo $m$. We just split $F(x)$ into $m$ sums of powers of the elements of arithmetic progressions. In this case Dirichlet series equals to $L(s-k,\chi)$.
\end{example}

\begin{example}\label{ex:charact-k-powers}\rm
The characteristic function of $k$-th powers, $k\in\mathbb{N}$, 
sums up in~$O(1)$ trivially, so its deceleration equals to $0$. Dirichlet series of such function is $\zeta(ks)$. 

Consider now $f$ such that $f(n^k)=\chi(n)$ and $f(n)=0$ otherwise, where $\chi$ is a multiplicative character. Then 
$$
\sum_{n=1}^\infty {f(n)\over n^s} = L(ks, \chi).
$$
Such function $f$ also sums up in $O(1)$, because
$F(x) = \sum_{n\le x^{1/k}} \chi(n)$
(see Example \ref{ex:k-powers}).

Generally, if function $f$ has Dirichlet series ${\mathcal F}(s)$ and  function $g$ has Dirichlet series ${\mathcal F}(ks)$ 
then 
$
\dec g = (\dec f) / k.
$
\end{example}

\begin{example}\label{ex:mu}\rm
Consider Mertens function $M(x):=\sum_{n\le x} \mu(n)$. 
In \cite{deleglise1996b} an algorithm of computation of $M(x)$ is proposed with time complexity~$O(x^{2/3}\*\log^{1/3}\log x)$ and memory consumption $O(x^{1/3}\*\log^{2/3}\log x)$. We obtain~$\dec\mu \hm= 2/3$. 

Note that Dirichlet series of $\mu$ equals to $1/\zeta(s)$.

One can see that a function $\mu_k$ such that $\mu_k(n^k)=\mu(n)$ and~$\mu_k(n)\hm=0$ otherwise sums up with the deceleration $2/(3k)$. Its Dirichlet series is $1/\zeta(ks)$.
\end{example}

\begin{example}\label{ex:tau2}\rm
In \cite{tao2012} an algorithm of computation of 
$T_2(x) := \sum_{n\le x} \tau_2(n)$ in $O(x^{1/3+\eps})$ time is described. Another algorithm with the same complexity may be found in \cite{sladkey2012}, accompanied with detailed account and pseudocode implementation. Thus~$\dec\tau_2=1/3$. 
\end{example}

\begin{theorem}\label{th:qsum1}
Let $f$ and $g$ be two easily-computable multiplicative functions, which sums up with decelerations $a := \dec f$ and $b := \dec g$ such that~$a\hm+b<2$. Then $h:=f\star g$ sums up with the deceleration 
$$ \dec h = {1-ab \over 2-a-b}.$$
\end{theorem}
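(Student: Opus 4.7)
My approach is Dirichlet's hyperbola method. For a free parameter $y \in [1,x]$ and $y' := x/y$, the standard identity
$$H(x) := \sum_{n\le x} h(n) = \sum_{d\le y} f(d)\, G(x/d) + \sum_{e\le y'} g(e)\, F(x/e) - F(y)\, G(y'),$$
where $F,G$ denote the partial sums of $f,g$, reduces the task to evaluating $\Theta(y)$ partial sums of $G$ and $\Theta(y')$ partial sums of $F$, using precomputed tables of $f$ on $[1,y]$ and $g$ on $[1,y']$. The plan is to precompute these tables via Algorithm~M in time $O(y^{1+\eps}+(y')^{1+\eps})$ (Lemma \ref{l:sieve-compl}), and then invoke the hypothesised summation algorithms at decelerations $b$ and $a$ on each of the required arguments.

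The dominant contribution to the running time is
$$\sum_{d\le y} (x/d)^{b+\eps} + \sum_{e\le y'} (x/e)^{a+\eps} \ll x^{b+\eps}\, y^{1-b} + x^{a+\eps}\, (y')^{1-a},$$
any logarithmic factor possibly arising at $a=1$ or $b=1$ being absorbed into $\eps$. Setting $y' = x/y$ and balancing the two terms yields the optimal choice
$$y = x^{(1-b)/(2-a-b)}, \qquad y' = x^{(1-a)/(2-a-b)},$$
which is well defined exactly because $a+b<2$. Substituting back, the exponent of the cost simplifies via
$$b + \frac{(1-b)^2}{2-a-b} = \frac{b(2-a-b)+(1-b)^2}{2-a-b} = \frac{1-ab}{2-a-b},$$
matching the claimed deceleration.

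It then remains to check that the precomputation cost and the one-off correction $F(y)G(y')$ do not dominate, which reduces to the elementary inequalities $1-b\le 1-ab$ and $1-a\le 1-ab$, both valid since $a,b\le 1$. The only genuine subtlety, modest but worth flagging, is bookkeeping the $\eps$-losses across the $\Theta(y+y')$ subroutine calls so that they aggregate into a single $x^\eps$ factor rather than a polynomial blowup; this is handled routinely by choosing a uniform $\eps$ on every sub-call and invoking the standard tail estimate $\sum_{d\le y} d^{-b-\eps}\ll y^{1-b}$.
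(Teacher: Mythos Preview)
Your argument is correct and coincides with the paper's own proof: both use the Dirichlet hyperbola identity, precompute the needed values of $f$ and $g$ via Algorithm~M, and balance the two sums by choosing the split point $y=x^c$ with $c=(1-b)/(2-a-b)$. The only cosmetic difference is that you parameterize by $y$ and $y'=x/y$ while the paper writes $x^c$ and $x^{1-c}$; your explicit verification that the precomputation and the cross term $F(y)G(y')$ are dominated (via $a,b\le 1$, which holds since Algorithm~M already gives deceleration~$1$) is a detail the paper leaves implicit.
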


\begin{proof}
Let
$$ F(x):=\sum_{n\le x} f(n), \quad G(x):=\sum_{n\le x} g(n), \quad H(x):=\sum_{n\le x} h(n).$$
By definition of the Dirichlet convolution
$$
H(x)=\sum_{n\le x} \sum_{d_1 d_2 =n } f(d_1)g(d_2)=
\sum_{ d_1d_2\le x} f(d_1)g(d_2). 
$$
Rearrange items:
$$
\sum_{ d_1d_2\le x} =
\sum_{ \scriptstyle d_1\le x^c \atop \scriptstyle  d_2\le {x/ d_1} } +
\sum_{ \scriptstyle  d_1\le {x/ d_2} \atop \scriptstyle  d_2\le x^{1-c} }
- \sum_{ \scriptstyle  d_1\le x^c \atop \scriptstyle  d_2\le x^{1-c} },
$$
where an absolute constant $c\in(0,1)$ will be defined below in \eqref{eq:c-defined}. 
Now
\begin{equation}
\tag{1}
\label{eq:qsum1}
H(x)= \sum_{d\le x^c} f(d)G\left({x\over d}\right)+
\sum_{d\le x^{1-c}} g(d)F\left({x\over d}\right)-F(x^c)G(x^{1-c}).
\end{equation}
As far as we can calculate $f(1),\ldots,f(x^c)$ with Algorithm M in~$O(x^{c+\eps})$ steps,
we can compute the first sum at the right side of~\eqref{eq:qsum1} in time
\begin{multline*}
O(x^{c+\eps}) + \sum_{d\le x^c} O\left({x\over d}\right)^{b+\eps} \ll x^{b+\eps} \sum_{d\le x^c} d^{-b-\eps} 
\ll \\ \ll
x^{b+\eps} x^{c(1-b-\eps)}
\ll x^{c+b(1-c)+\eps}.
\end{multline*}
Similarly the second sum can be computed in $O(x^{1-c+ac+\eps})$ operations. The last item of~\eqref{eq:qsum1} can be computed in time $O(x^{ac+\eps}+x^{b(1-c)+\eps})$. 

It remains to select $c$ such that 
$ c+b(1-c) \hm= 1-c+ac$. Thus
\begin{equation}
\tag{2}
\label{eq:c-defined}
c={1-b \over 2-a-b},
\end{equation}
which implies the deceleration~$ (1-ab)/(2-a-b)$.
\end{proof}

\begin{example}\rm
Function $\sigma_k(n)$ maps $n$ into the sum of $k$-th powers of its divisors. Thus $\sigma_k(n)=\sum_{d | n} d^k$, which is the Dirichlet convolution of~$f(n)=n^k$ and ${\bf 1}(n)=1$. So Example \ref{ex:k-powers} and Theorem \ref{th:qsum1} shows that~$ \dec \sigma_k = 1/2$.
\end{example}

\begin{example}\rm
Consider $r(n)=\#\{(k,l) \mid k^2+l^2=n\}$. It is well-known that~$r(n)/4$ is a multiplicative function, and ${1\over4}R(x):=\sum_{n\le x}r(n)/4$ is the number of integer points in the first quadrant of the circle of radius $\sqrt x$. Then~$R(x)$ can be naturally computed in $O(x^{1/2})$ steps, so~$\dec r = 1/2$.

Dirichlet series of $r(n)/4$ equals to $\zeta(s) L(s,\chi_4)$, where $\chi_4$ is the single non-principal character modulo 4. This representation shows that~$r(\cdot)/4 = \chi_4 \star {\bf 1}$. Thus Example \ref{ex:k-powers} together with Theorem \ref{th:qsum1} gives us another way to estimate the deceleration of $r$.
\end{example}

\begin{example}\rm
By Möbius inversion formula for the totient function we have 
$$
\phi(n)=\sum_{d \mid n} d\mu(n/d).
$$ 
This representation implies that $\dec\phi=3/4$ (see Example~\ref{ex:mu} for $\dec\mu$). Jor\-dan's totient functions have the same deceleration, because 
$$ J_k(n) = \sum_{d\mid n} d^k \mu(n/d). $$
\end{example}

\begin{theorem}\label{th:qsum2}
Let $f$ be an easily-computable multiplicative function. Consider
$$f_k := \underbrace{f\star\cdots\star f}_{k\text{~factors}}. $$
Then
$$
\dec f_k = 1-{1-\dec f \over k}.
$$
\end{theorem}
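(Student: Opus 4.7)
The plan is to induct on $k$. The base case $k=1$ is immediate since $f_1 = f$. For the inductive step, I would write $f_k = f \star f_{k-1}$ and apply Theorem \ref{th:qsum1} to this convolution. Two preliminary observations are needed: first, by Lemma \ref{l:convolution-computability} the iterated convolution $f_{k-1}$ is again easily-computable (applied $k-2$ times), so Theorem \ref{th:qsum1} is legitimately applicable; second, provided $\dec f < 1$, the inductive hypothesis $\dec f_{k-1} = 1 - (1-\dec f)/(k-1)$ stays strictly below $1$, guaranteeing the side condition $\dec f + \dec f_{k-1} < 2$ of Theorem \ref{th:qsum1}. (The degenerate case $\dec f = 1$ matches the target formula $\dec f_k = 1$ trivially via Algorithm M.)

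Writing $a := \dec f$ and $b := \dec f_{k-1} = 1 - (1-a)/(k-1)$, Theorem \ref{th:qsum1} yields
$$
\dec f_k = \frac{1-ab}{2-a-b}.
$$
The remainder of the proof is purely algebraic. I would factor $1-a$ out of both numerator and denominator: the denominator becomes
$$
2-a-b = (1-a)+(1-b) = (1-a)\Bigl(1 + \tfrac{1}{k-1}\Bigr) = \frac{k(1-a)}{k-1},
$$
while a direct expansion of the numerator gives
$$
1-ab = (1-a) + a(1-b) = (1-a)\Bigl(1 + \tfrac{a}{k-1}\Bigr) = \frac{(1-a)(k-1+a)}{k-1}.
$$
Dividing cancels the common factor $(1-a)/(k-1)$ and leaves
$$
\dec f_k = \frac{k-1+a}{k} = 1 - \frac{1-a}{k},
$$
closing the induction.

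There is no real obstacle here: the statement is essentially a fixed-point calculation on top of Theorem \ref{th:qsum1}, and the only thing to be careful about is checking the hypotheses of that theorem at each inductive step (easy-computability of $f_{k-1}$ and the bound on decelerations), both of which are handled by the observations above. The algebraic identity itself is forced by the shape of the recursion $b \mapsto (1-ab)/(2-a-b)$, which has $1-a$ as the natural scale.
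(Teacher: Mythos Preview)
Your proof is correct and follows essentially the same route as the paper: induction on $k$, invoking Lemma~\ref{l:convolution-computability} for easy-computability and Theorem~\ref{th:qsum1} for the recursive deceleration formula, then verifying the algebraic identity. You are somewhat more careful than the paper in explicitly checking the side condition $a+b<2$ and treating the degenerate case $\dec f = 1$.
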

\begin{proof}
Follows from iterative applications of Lemma \ref{l:convolution-computability} and Theorem \ref{th:qsum1} and from the identities
$$
{1-a^2\over 2-2a} = 1-{1-a\over 2},
$$
$$
{1-a(k+a-1)/k \over 2 - 1 + (1-a)/k - a}
= 1 - {1-a \over k+1}.
$$
\end{proof}

\begin{example}\label{ex:tauk}\rm
For the multidimensional divisor function $\tau_k$ representations
\begin{eqnarray*}
\tau_{2k}&=&\underbrace{\tau_2\star\ldots\star\tau_2}_{k \text{~factors}},
\\
\tau_{2k+1}&=&\underbrace{\tau_2\star\ldots\star\tau_2}_{k \text{~factors}} \star {\bf 1}
\end{eqnarray*} 
imply that by Example \ref{ex:tau2} and Theorem \ref{th:qsum2} 
function $\tau_{2k}$ sums up with the deceleration $1-2/(3k)$, and $\tau_{2k+1}$ with the deceleration~$1-2/(3k+2)$.

In other words 
\begin{equation}
\tag{3}
\label{eq:dec-tauk}
\dec \tau_k = \begin{cases}
1-4/(3k), & k \text{~is~even}, \\
1-4/(3k+1), & k \text{~is~odd}. \\
\end{cases}
\end{equation}

Considering 
$$\tau_{-k} = \underbrace{\mu\star\cdots\star\mu}_{k \text{~factors}},  $$
we obtain by Example~\ref{ex:mu} and Theorem \ref{th:qsum2} that $\dec\tau_{-k}=1-1/(3k)$.
\end{example}

\bigskip

Theorems~\ref{th:qsum1} and~\ref{th:qsum2} cannot provide the deceleration lower than $1/2$ even in the best case. To overcome this barrier we should develop better instruments. 

\begin{theorem}\label{th:qqsum}
Let $f$ and $g$ be two easily-computable multiplicative functions, which sums up with decelerations $a := \dec f$ and $b := \dec g$ such that~$a+b<2$. Let 
\begin{equation}
\tag{4}
\label{eq:convolution-generalized}
h(n) := \sum_{d_1^{k_1} d_2^{k_2} = n} f(d_1) g(d_2).
\end{equation}
Then $h$ sums up with the deceleration 
$$ \dec h = {1-ab \over (1-a)k_2+(1-b)k_1}.$$
\end{theorem}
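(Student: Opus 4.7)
The plan is to adapt the Dirichlet-hyperbola argument of Theorem~\ref{th:qsum1} to the generalized convolution~\eqref{eq:convolution-generalized}, replacing the divisor condition $d_1d_2\le x$ by $d_1^{k_1}d_2^{k_2}\le x$. Writing
$$
H(x) = \sum_{n\le x} h(n) = \sum_{d_1^{k_1}d_2^{k_2}\le x} f(d_1)g(d_2),
$$
I would split the summation region by a parameter $\alpha\in(0,1)$ into $\{d_1\le x^{\alpha/k_1}\}\cup\{d_2\le x^{(1-\alpha)/k_2}\}$ minus the overlapping rectangle. This covers the whole region because both $d_1^{k_1}>x^\alpha$ and $d_2^{k_2}>x^{1-\alpha}$ would contradict $d_1^{k_1}d_2^{k_2}\le x$. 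The result is the direct analogue of~\eqref{eq:qsum1}:
$$
H(x)=\sum_{d\le x^{\alpha/k_1}} f(d)\,G\bigl((x/d^{k_1})^{1/k_2}\bigr)+\sum_{d\le x^{(1-\alpha)/k_2}} g(d)\,F\bigl((x/d^{k_2})^{1/k_1}\bigr)-F(x^{\alpha/k_1})\,G(x^{(1-\alpha)/k_2}).
$$

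Next I would estimate the cost of each piece. Algorithm~M precomputes $f(1),\ldots,f(\lfloor x^{\alpha/k_1}\rfloor)$ in $O(x^{\alpha/k_1+\eps})$ operations, and by hypothesis each value $G((x/d^{k_1})^{1/k_2})$ is produced in $O((x/d^{k_1})^{b/k_2+\eps})$ time. Summing the resulting power series in $d$, with an $x^\eps$ slack absorbing any logarithms or endpoint dominations, yields the bound $O(x^{\alpha/k_1+b(1-\alpha)/k_2+\eps})$ for the first sum; symmetrically the second sum costs $O(x^{(1-\alpha)/k_2+a\alpha/k_1+\eps})$, and the overlap term is dominated by both.

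It remains to balance the two main exponents; equating them gives the linear equation
$$
\frac{\alpha(1-a)}{k_1}=\frac{(1-\alpha)(1-b)}{k_2},
$$
whose solution is $\alpha=k_1(1-b)/D$ with $D:=(1-a)k_2+(1-b)k_1$. The hypothesis $a+b<2$ ensures $D>0$ and $\alpha\in(0,1)$. Substituting back collapses the common exponent to $(1-b)/D+b(1-a)/D=(1-ab)/D$, matching the claimed deceleration.

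The one subtle point is the power-sum $\sum_{d\le N} d^{-bk_1/k_2}$: its leading behaviour splits into three regimes depending on whether $bk_1/k_2$ is less than, equal to, or greater than~$1$, and analogously for the symmetric sum. In each regime either the trivial bound $N\cdot 1$ or the convergent-series bound $N^{1-bk_1/k_2}$ dominates, but in all three cases the answer is uniform modulo a factor $x^\eps$, so the final exponent is the same and the theorem follows.
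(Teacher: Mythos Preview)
Your argument is precisely the paper's own: the same hyperbola split of $\{d_1^{k_1}d_2^{k_2}\le x\}$ at the threshold $d_1^{k_1}=x^c$ (your $\alpha$ is the paper's $c$), the same three-term identity for $H(x)$, the same cost bounds for each piece, and the same balancing choice $c=(1-b)k_1/D$ that collapses the common exponent to $(1-ab)/D$. Your closing paragraph on the three regimes of $\sum_{d\le N} d^{-bk_1/k_2}$ actually goes slightly beyond the paper, which tacitly works only in the divergent regime $bk_1/k_2<1$ when writing $\sum_{d\le x^{c/k_1}}(x/d^{k_1})^{b/k_2}\ll x^{b/k_2+(1-bk_1/k_2)c/k_1}$.
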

\begin{proof}
Following the outline of the proof of Theorem~\ref{th:qsum1} we obtain identity
\begin{multline*}
H(x) = 
\sum_{d\le x^{c/k_1}} f(d) G\left( \sqrt[k_2]{x/ d^{k_1}} \right)
+ 
\sum_{d\le x^{(1-c)/k_2}} g(d) F\left(\sqrt[k_1]{x/d^{k_2}}\right)
-\\-
F(x^{c/k_1}) G(x^{(1-c)/k_2}).
\end{multline*}
Thus we need $y(x)$ operations to calculate $H(x)$, where
\begin{multline*}
y(x) 
\ll 
\sum_{d\le x^{c/k_1}} \left( {x\over d^{k_1}} \right)^{b/k_2}
+ 
\sum_{d\le x^{(1-c)/k_2}} \left( {x\over d^{k_2}} \right)^{a/k_1} 
+ \\ +
x^{ac/k_1} + x^{b(1-c)/k_2}
\ll \\ \ll
x^{b/k_2+(1-bk_1/k_2)\cdot c/k_1} 
+ 
x^{a/k_1 + (1-ak_2/k_1)\cdot(1-c)/k_2} 
+ \\ +
x^{ac/k_1} + x^{b(1-c)/k_2}.
\end{multline*}
Substitution 
$$
c = {(1-b)k_1 \over (1-a)k_2+(1-b)k_1}
$$ 
completes the proof.
\end{proof}

In terms of Dirichlet series identity \eqref{eq:convolution-generalized} means that
$$
{\mathcal H}(s) = {\mathcal F}(k_1s) {\mathcal G}(k_2s)
$$
where 
$$
{\mathcal F}(s) = \sum_{n=1}^\infty {f(n)\over n^{s}}, 
\quad
{\mathcal G}(s) = \sum_{n=1}^\infty {g(n)\over n^{s}}, 
\quad
{\mathcal H}(s) = \sum_{n=1}^\infty {h(n)\over n^{s}}.
$$

One can prove (similarly to Lemma \ref{l:convolution-computability}) that convolutions of form \eqref{eq:convolution-generalized} save a property of the easily-computability.

\begin{example}\label{th:qsum3}\rm
Function $\tau_2^*$ sums up with the deceleration $7/15$, because
$$
\tau^*_2(n)=\sum_{d^2 \mid n} \mu(d) \tau_2(n/d^2).
$$
\end{example}

\begin{example}\rm
As soon as
$$
\tau_2^2(n) = \sum_{d^2\mid n} \mu(d) \tau_4(n/d^2),
$$
we obtain $\dec\tau_2^2=5/9$.
\end{example}

\bigskip

The discussion in Examples \ref{ex:k-powers}, \ref{ex:charact-k-powers}, \ref{ex:mu} leads to the following general statement.

\begin{theorem}\label{th:general}
Let $f$ be a multiplicative function such that 
\begin{equation}\label{eq:zeta-product}
\tag{5}
\sum_{n=1}^\infty {f(n)\over n^s} = 
\prod_{m=1}^{M_1} \zeta(k_m s)^{\pm1} 
\prod_{m=1}^{M_2} z_m(l_m s-n_m)
,
\end{equation}
where each of $z_m$ is either $\zeta$ or $L(\cdot,\chi)$, $M_1$, $M_2$, $k_m, l_m, n_m \in \N$. Then~$f$ sums up in sublinear time: its deceleration is strictly less than $1$.
\end{theorem}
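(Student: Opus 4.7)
The plan is to induct on $M := M_1 + M_2$, the total number of factors on the right-hand side of~\eqref{eq:zeta-product}, using the Examples of the paper to handle the base case and Theorem~\ref{th:qsum1} together with Lemma~\ref{l:convolution-computability} to carry out the inductive step.

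For the base case $M = 1$ I would treat each of the four possible single factors separately. The factor $\zeta(k_1 s)$ gives the characteristic function of $k_1$-th powers, which sums trivially in $O(1)$ (Example~\ref{ex:charact-k-powers}). The factor $1/\zeta(k_1 s)$ gives the function $\mu_{k_1}$ of Example~\ref{ex:mu}, with deceleration $2/(3k_1) \leq 2/3$. The factor $\zeta(l_1 s - n_1)$ corresponds to the multiplicative function supported on $l_1$-th powers with value $j^{n_1}$ at $j^{l_1}$; its summatory function reduces, via the substitution $m = j^{l_1}$, to a power sum on $j \leq x^{1/l_1}$ that is computable in $O(1)$ using the Bernoulli-polynomial formula of Example~\ref{ex:k-powers}. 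The shifted $L$-factor is treated identically, splitting by residue class modulo the conductor of the character. In all four subcases the associated function is easily-computable with deceleration at most $2/3 < 1$.

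For the inductive step $M \geq 2$, I would peel off a single factor, writing the Dirichlet series of $f$ as $\mathcal F_1(s)\,\mathcal F_2(s)$ where $\mathcal F_1$ is a single factor of the type handled above and $\mathcal F_2$ is the product of the remaining $M-1$ factors. Since multiplication of Dirichlet series translates into ordinary Dirichlet convolution of the associated multiplicative functions, one has $f = f_1 \star f_2$ with $\dec f_1 \leq 2/3$ by the base case and $\dec f_2 < 1$ by the inductive hypothesis; both $f_1$ and $f_2$ remain easily-computable by Lemma~\ref{l:convolution-computability}. Applying Theorem~\ref{th:qsum1} (whose hypothesis $a + b < 2$ is automatic) yields
\[
\dec f = \frac{1 - \dec f_1 \cdot \dec f_2}{2 - \dec f_1 - \dec f_2},
\]
and the elementary identity $2 - a - b - (1 - ab) = (1-a)(1-b)$, positive whenever $a, b < 1$, shows that this value is strictly less than $1$.

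The main obstacle is purely the bookkeeping of the base case: one has to verify that every shifted basic factor, most notably $L(l_m s - n_m, \chi)$, does correspond to a bona fide easily-computable multiplicative function whose summatory function admits the claimed closed-form evaluation. Once this routine check is performed using the cited Examples, the entire inductive machinery is driven by the single algebraic observation $(1-a)(1-b) > 0$.
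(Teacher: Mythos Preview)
Your proposal is correct and is exactly the explicit argument the paper leaves implicit: the paper gives no proof of Theorem~\ref{th:general} beyond the remark that it follows from the discussion in Examples~\ref{ex:k-powers}--\ref{ex:mu}, and your induction on the number of factors, with the base cases supplied by those examples and the inductive step by Theorem~\ref{th:qsum1} via the identity $(2-a-b)-(1-ab)=(1-a)(1-b)>0$, is precisely that intended argument. One minor remark: since you absorb the scaling $s\mapsto k_m s$ into the base-case functions themselves, you never need the more general Theorem~\ref{th:qqsum}, which the paper developed for sharper explicit deceleration bounds rather than for the bare sublinearity claim here.
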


Theorem \ref{th:general} clearly shows that the concept of fast summation can be easily generalized over various quadratic fields. Following theorem is an example of such kind of results.

\begin{theorem}
Consider the ring of Gaussian integers $\Z[i]$. 
Let 
$$\tt_k\colon \Z[i]\hm\to\Z$$ 
be a $k$-dimensional divisor function on this ring. Let 
$$
\TT_k(x) := \sum_{N(\alpha)\le x} \tt_k(\alpha),
$$
where $N(a+ib) = a^2+b^2 $. Then $\TT_k(x)$ can be computed in sublinear time.
\end{theorem}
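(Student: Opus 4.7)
The plan is to represent the Dirichlet series associated with $\TT_k$ as a product of Riemann zeta and Dirichlet $L$ factors of the form~\eqref{eq:zeta-product}, and then invoke Theorem~\ref{th:general}. The key classical identity is that the Dedekind zeta function of the Gaussian integers factors as
$$\zeta_{\Z[i]}(s) = \sum_{\mathfrak{a}} \frac{1}{N(\mathfrak{a})^s} = \zeta(s)\,L(s,\chi_4),$$
where $\chi_4$ is the non-principal Dirichlet character modulo~$4$ and the sum runs over nonzero ideals of $\Z[i]$. Raising this to the $k$-th power,
$$\zeta_{\Z[i]}(s)^k = \zeta(s)^k L(s,\chi_4)^k = \sum_{\mathfrak{a}} \frac{d_k(\mathfrak{a})}{N(\mathfrak{a})^s},$$
where $d_k(\mathfrak{a})$ counts ordered factorizations of the ideal $\mathfrak{a}$ into $k$ ideal factors.

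First I would reduce the element-level sum $\TT_k$ to an ideal-level sum. Since $\Z[i]$ is a principal ideal domain with unit group $\{\pm1,\pm i\}$ of order~$4$, each factorization $\mathfrak{a}_1\cdots\mathfrak{a}_k=\langle\alpha\rangle$ of ideals lifts to exactly $4^{k-1}$ factorizations $\alpha_1\cdots\alpha_k=\alpha$ of Gaussian integers (freely choosing $k-1$ unit ambiguities, the last being forced by the required product), whence $\tt_k(\alpha)=4^{k-1}d_k(\langle\alpha\rangle)$. Since every nonzero ideal has exactly four generators,
$$\TT_k(x)=\sum_{0<N(\alpha)\le x}\tt_k(\alpha)=4^k\sum_{n\le x}g(n),\qquad g(n):=\sum_{N(\mathfrak{a})=n}d_k(\mathfrak{a}).$$

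Second, $g$ is multiplicative, as it is the coefficient sequence of the Euler product $\zeta(s)^k L(s,\chi_4)^k$, and easily-computable: for each rational prime~$p$, the splitting type in $\Z[i]$ (split when $p\equiv 1\pmod 4$, inert when $p\equiv 3\pmod 4$, ramified when $p=2$) yields an elementary closed-form expression for $g(p^\alpha)$ in terms of binomial coefficients in $\alpha$ and $k$. The Dirichlet series of $g$ falls under the template~\eqref{eq:zeta-product} with $M_1=0$ and $M_2=2k$ unshifted factors, so Theorem~\ref{th:general} supplies a sublinear-time algorithm for $\sum_{n\le x}g(n)$, and therefore for $\TT_k(x)$. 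The only delicate step is the unit bookkeeping relating the Gaussian-integer divisor function $\tt_k$ to the ideal divisor function $d_k$; once that is settled, the remainder is a direct substitution into Theorem~\ref{th:general}.
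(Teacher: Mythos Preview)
Your argument is correct and rests on the same classical factorization $\zeta_{\Z[i]}(s)=\zeta(s)L(s,\chi_4)$ that the paper uses. The difference lies in how the sublinearity is extracted. You appeal directly to Theorem~\ref{th:general}, which is entirely sufficient for the statement as written. The paper instead avoids Theorem~\ref{th:general} and works by hand: it writes the coefficient function of $\zeta(s)^k L(s,\chi_4)^k$ as the Dirichlet convolution of $\tau_k$ with the $k$-fold self-convolution $\chi_4\star\cdots\star\chi_4$, then applies Theorem~\ref{th:qsum2} to get $\dec(\chi_4^{\star k})=1-1/k$, and finally combines this with~\eqref{eq:dec-tauk} via Theorem~\ref{th:qsum1} to obtain the explicit values
\[
\dec f \;=\; 1-\tfrac{4}{7k}\ \text{($k$ even)},\qquad
\dec f \;=\; 1-\tfrac{4}{7k+1}\ \text{($k$ odd)}.
\]
So your route is shorter and adequate for the qualitative claim, while the paper's route buys a concrete deceleration exponent. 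Incidentally, your unit bookkeeping (yielding the factor $4^k$) is more careful than the paper's, which writes a single factor $1/4$ for all~$k$; this does not affect the complexity conclusion but is worth noting. One small formal point: as stated, Theorem~\ref{th:general} has $n_m\in\N$, so if $\N$ excludes~$0$ you should place the $\zeta(s)^k$ factors in the first product ($M_1=k$, each $k_m=1$) rather than setting $M_1=0$.
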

\begin{proof}
It is well-known that
$$
{1\over4} \sum_{\alpha\in\Z[i]} {\tt_k(\alpha) \over N^s(\alpha)}
= \zeta^k(s) L^k(s,\chi_4) = \sum_{n=1}^\infty {f(n) \over n^s},
$$
where 
$$
f(n) := \sum_{N(\alpha)=n} \tt_k(\alpha).
$$
But by Theorem \ref{th:qsum2}
$$
\dec \underbrace{\chi_4\star\cdots\star\chi_4}_{k\text{~factors}} = 1 - 1/k.
$$
By \eqref{eq:dec-tauk} we obtain that for even $k$ 
$$ 
\dec f = {1-(1-1/k)\bigl(1-4/(3k)\bigr) \over {1/k} + {4/(3k)}} = 1 - {4\over 7k}
$$ and for odd $k$ 
$$
\dec f = {1-(1-1/k)\bigl(1-4/(3k+1)\bigr) \over {1/k} + {4/(3k+1)}} = 1 - {4\over 7k+1}.
$$
\end{proof}

\section{Proof of the Theorem \ref{th:polymath2}}\label{s:proof}

The proof follows the outline of the proof of~\cite[Th. 1.2]{tao2012}, but uses improved bound for the complexity of the computation of 
$$
T_2^*(x) := \sum_{n\le x} \tau_2^*(n).
$$

\begin{proof}
Trivially we have
$$
\sum_{a\le n\le b} \tau_2^*(n) = T_2^*(b)-T_2^*(a-1).
$$
As soon as $\tau_2^*(n) = 2^{\omega(n)}$, where $\omega(n) = \sum_{p\mid n} 1$, 
all summands in the left side are divisible by 4, beside those, which corresponds to $n=p^j$. Moving to the congruence modulo 4, we obtain
$$
2\sum_{j=1}^{O(\log x)} \# \left\{ p \in\left[a^{1/j},b^{1/j}\right] \right\} \equiv T_2^*(b)-T_2^*(a-1) \pmod 4. 
$$
As far as $a>x$ and $b-a\le O(x^{1/2+c})$, then for $j>1$
interval~$\left[a^{1/j},b^{1/j}\right]$ con\-tains~$O(x^c)$ elements; thus all such summands can be computed in~$O(x^{c+\eps})$ steps using AKS primality test~\cite{agrawal2004}. The right side of the congruence is com\-pu\-table in $O(x^{7/15+\eps})$ operations due to Example~\ref{th:qsum3}. 

The discussion above shows that the desired quantity
\begin{multline*}
\# \bigl\{p\in[a,b]\bigr\} \equiv { T_2^*(b)-T_2^*(a-1) \over 2}
- 
\\
-\sum_{j=2}^{O(\log x)} \# \left\{ p \in\left[a^{1/j},b^{1/j}\right] \right\} \pmod 2 \end{multline*}
can be computed in $O(x^{\max(c,7/15)+\eps})$ steps.
\end{proof}

\section{Conclusion}

Further development of algorithms of the sublinear sum\-ma\-tion (e.~g., summation of $\mu$ in arithmetic progressions) will lead to the generalization of Theorem \ref{th:general} over broader classes of functions. Also one can investigate summation of $f$ such that its Dirichlet series is an infinite, but sparse product of form \eqref{eq:zeta-product}.

\bibliographystyle{ugost2008s}
\bibliography{parity}

\begin{thebibliography}{10}
\def\selectlanguageifdefined#1{
\expandafter\ifx\csname date#1\endcsname\relax
\else\language\csname l@#1\endcsname\fi}
\providecommand*{\href}[2]{{\small #2}}
\providecommand*{\url}[1]{{\small #1}}
\providecommand*{\BibUrl}[1]{\url{#1}}
\providecommand{\BibAnnote}[1]{}
\providecommand*{\BibEmph}[1]{#1}
\providecommand*{\cyrdash}{\hbox to.8em{--\hss--}}
\providecommand*{\BibDash}{\ifdim\lastskip>0pt\unskip\nobreak\hskip.2em\fi
\cyrdash\hskip.2em\ignorespaces}

\bibitem{agrawal2004}
\selectlanguageifdefined{english}
\BibEmph{Agrawal~M., Kayal~N., Saxena~N.} {PRIMES is in P}~// \BibEmph{Ann.
  Math.} \BibDash
\newblock 2004. \BibDash
\newblock Vol. 160, no.~2. \BibDash
\newblock P.~781--793.

\bibitem{baker2001}
\selectlanguageifdefined{english}
\BibEmph{Baker~R.~C., Harman~G., Pintz~J.} {The difference between consecutive
  primes II}~// \BibEmph{Proc. Lond. Math. Soc.} \BibDash
\newblock 2001. \BibDash
\newblock Vol.~83, no.~3. \BibDash
\newblock P.~532--562.

\bibitem{charles2006}
\selectlanguageifdefined{english}
\BibEmph{Charles~D.~X.} Computing the Ramanujan tau function~//
  \BibEmph{Ramanujan J.} \BibDash
\newblock 2006. \BibDash
\newblock Vol.~11, no.~2. \BibDash
\newblock P.~221--224.

\bibitem{deleglise1996b}
\selectlanguageifdefined{english}
\BibEmph{Deléglise~M., Rivat~J.} Computing the summation of the Möbius
  function~// \BibEmph{Exp. Math.} \BibDash
\newblock 1996. \BibDash
\newblock Vol.~5, no.~4. \BibDash
\newblock P.~291--295.

\bibitem{flajolet2009}
\selectlanguageifdefined{english}
\BibEmph{Flajolet~P., Sedgewick~R.} Analytic combinatorics. \BibDash
\newblock Cambridge University Press, 2009. \BibDash
\newblock 824~p.

\bibitem{lenstra2011}
\selectlanguageifdefined{english}
\BibEmph{Jr.~H. W.~Lenstra, Pomerance~C.} Primality testing with Gaussian
  periods. \BibDash
\newblock 2011. \BibDash nov. \BibDash
\newblock URL: \BibUrl{http://www.math.dartmouth.edu/~carlp/aks041411.pdf}.

\bibitem{lagarias1987}
\selectlanguageifdefined{english}
\BibEmph{Lagarias~J.~C., Odlyzko~A.~M.} {Computing $\pi(x)$: An analytic
  method}~// \BibEmph{J. Algorithms}. \BibDash
\newblock 1987. \BibDash
\newblock Vol.~8, no.~2. \BibDash
\newblock P.~173--191.

\bibitem{sladkey2012}
\selectlanguageifdefined{english}
\BibEmph{Sladkey~R.} A Successive approximation algorithm for computing the
  divisor summatory function. \BibDash
\newblock 2012. \BibDash
\newblock URL: \BibUrl{http://arxiv.org/pdf/1206.3369v1}.

\bibitem{tao2012}
\selectlanguageifdefined{english}
\BibEmph{Tao~T., III~E.~Croot, Helfgott~H.} Deterministic methods to find
  primes~// \BibEmph{Math. Comp.} \BibDash
\newblock 2012. \BibDash
\newblock Vol.~81, no. 278. \BibDash
\newblock P.~1233--1246.

\bibitem{oeis2011}
\selectlanguageifdefined{english}
{The on-line encyclopedia of integer sequences}~/ Ed.\ by\ N.~J.~A.~Sloane.
  \BibDash
\newblock URL: \BibUrl{http://oeis.org}.

\bibitem{lenstra1993}
\selectlanguageifdefined{english}
The development of the number field sieve~/ Ed.\ by\ A.~K.~Lenstra,
  H.~W.~Lenstra. \BibDash
\newblock Springer Verlag, 1993. \BibDash
\newblock Vol.~1554 of \BibEmph{Lecture Notes in Mathematics}.

\end{thebibliography}

\end{document}